\renewcommand{\(}{\left\(}
\renewcommand{\)}{\right\)}
\renewcommand{\[}{\left\[}
\renewcommand{\]}{\right\]}
\numberwithin{equation}{section}
 \theoremstyle{plain}
\newtheorem{theorem}{Theorem}[section]
\newtheorem{lemma}[theorem]{Lemma}
\newtheorem{conjecture}[theorem]{Conjecture}
\newtheorem{proposition}[theorem]{Proposition}
\def\proof{\@ifnextchar[{\@oproof}{\@nproof}}
\def\@oproof[#1][#2]{\trivlist\item[\hskip\labelsep\textit{#2 Proof of\
#1.}~]\ignorespaces}
\def\@nproof{\trivlist\item[\hskip\labelsep\textit{Proof.}~]\ignorespaces}
\begin{document}
\title[Zeros of Ramanujan-type Polynomials]{Zeros of Ramanujan-type Polynomials}


 \author{Bibekananda Maji}
\address{Bibekananda Maji\\ Department of Mathematics \\
Indian Institute of Technology Indore \\
Indore, Simrol, Madhya Pradesh 453552, India.} 
\email{bibek10iitb@gmail.com, bibekanandamaji@iiti.ac.in}

 \author{Tithi Sarkar}
\address{Tithi Sarkar\\ Department of Mathematics \\
Indian Institute of Technology Indore \\
Indore, Simrol, Madhya Pradesh 453552, India.} 
\email{tithijtr@gmail.com}


\thanks{2010 \textit{Mathematics Subject Classification.} Primary 26C10; Secondary 11B68.\\
\textit{Keywords and phrases.} Bernoulli numbers,  odd zeta values,  Ramanujan polynomials,   roots on the unit circle,  reciprocal polynomials}

\maketitle

\begin{abstract}
Ramanujan's notebooks contain many elegant identities and one of the celebrated  identities is a formula for $\zeta(2k+1)$.  In 1972,  Grosswald gave an extension of the Ramanujan's formula for $\zeta(2k+1)$,  which contains a polynomial of degree $2k+2$.  This polynomial is now well-known as 
 the {\it{Ramanujan polynomial}} $R_{2k+1}(z)$, first studied by Gun,  Murty,  and Rath.  Around the same time,  Murty, Smith and Wang proved that all the non-real zeros of $R_{2k+1}(z)$ lie on the unit circle. Recently,  Chourasiya, Jamal, and the first author found a new polynomial while obtaining a Ramanujan-type formula for Dirichlet $L$-functions and named it as {\it{Ramanujan-type} polynomial} $R_{2k+1,p}(z)$.  In the same paper,  they conjectured that all the non-real zeros of $R_{2k+1,p}(z)$ lie on the circle $|z|=1/p$.  The main goal of this paper is to present a proof of this conjecture. 
\end{abstract}

\section{Introduction}
In the Second Notebook [33, p. 173, Ch. 14, Entry 21(i)] as well as in the Lost Notebook [34, p. ~319, Entry (28)],  Ramanujan mentioned the following remarkable identity for odd zeta values.  Considering $\alpha>0$ and $ \beta = \frac{\pi^2}{\alpha}$ and non-zero integer $k$,  he proved that
\begin{align}\label{1.2}
G_k(\alpha)=(-1)^kG_k(\beta)-2^{2k}\sum_{j=0}^{k+1}(-1)^{j-1}\frac{B_{2j}}{(2j)!}\frac{B_{2k+2-2j}}{(2k+2-2j)!}\alpha^{k+1-j}\beta^j,
\end{align}
where
$$
G_k(x)=x^{-k}\left\lbrace\frac{1}{2}\zeta(2k+1)+\sum_{n=0}^{\infty}\frac{n^{-2k-1}}{e^{2xn}-1}\right\rbrace.
$$
In 1972, Emil Grosswald \cite{item1} gave a notable  extension of Ramanujan's formula \eqref{1.2}.
Let us recall the divisor function $\sigma_{z}(m)=\sum_{d|m} d^{z}$ for any $z \in \mathbb{C}$.  For $\xi \in\mathbb{H}$,  $k \in \mathbb{Z}-\{ 0\}$,  we define
$$
\mathfrak{F}_k(\xi) :=\sum_{n=1}^{\infty}\sigma_{-k}(n)e^{2{\pi}in\xi}.
$$ 
Then we have
\begin{align}\label{1.3}
\mathfrak{F}_{2k+1}(\xi)-\xi^{2k}\mathfrak{F}_{2k+1}\left(-\frac{1}{\xi}\right)&=\frac{1}{2}\zeta(2k+1)(\xi^{2k}-1)\nonumber\\
&+\frac{(2{\pi}i)^{2k+1}}{2\xi}\sum_{j=0}^{k+1}\xi^{2k+2-2j}\frac{B_{2j}}{(2j)!}\frac{B_{2k+2-2j}}{(2k+2-2j)!}.                               
\end{align}
 Substituting $\xi=i \beta/\pi,$ $\alpha \beta={\pi}^2,$ with $\alpha,  \beta >0,$ the  identity \eqref{1.3} becomes same as the Ramanujan's identity \eqref{1.2} for $\zeta(2k+1).$ Later, Gun, Murty, and Rath \cite{item2} studied the finite sum involving Bernoulli numbers present on the right hand side of \eqref{1.3} and named it as the {\it Ramanujan polynomial}.  Thus,  for $k \in \mathbb{N}$ and $z \in \mathbb{C}$,  Ramanujan polynomials are defined as follows:
\begin{equation}\label{1.4}
R_{2k+1}(z):=\sum_{j=0}^{k+1}z^{2k+2-2j}\frac{B_{2j}}{(2j)!}\frac{B_{2k+2-2j}}{(2k+2-2j)!}.
\end{equation}
One can easily verify that Ramanujan polynomials are reciprocal or self-inverse.  In 2011,  Murty,  Smith,  and Wang \cite{MSW11} proved that all the non-real roots of $R_{2k+1}(z)$  lie on the unit circle $|z|=1$.   In 2013,  Lalin and Rogers \cite{LR13} observed similar behaviour for a few variant of Ramanujan polynomials involving Bernoulli numbers and Euler numbers.  One of the polynomials they \cite[Theorem 2.2]{LR13} have studied is the following: 
\begin{align}\label{Ramanujan-type polynomial_Lalin_Rogers}
R_{2k+1,  2}(z) = \sum_{j=1}^k(2^{2j}-1)(2^{2k+2-2j}-1)\frac{B_{2j}B_{2k+2-2j}}{{(2j)!(2k+2-2j)!}} z^{2k+2-2j}.
\end{align}
Interestingly, an  existence of the above polynomial can be seen from a Ramanujan-type formula studied by Berndt \cite[Theorem 4.7]{Berndt78} and Malurkar \cite{Malurkar25}. 
Recently,  Chourasiya,  Jamal,   and the first author \cite{CJM23} found the following Grosswald-type formula for the Dirichlet $L$-function $L(s,  \chi)$.  
Let $p$ be a prime number and $\chi_1$ be the principal Dirichlet character modulo $p$.  For any $k\in \mathbb{N}$ and $z \in \mathbb{H}$,  we define
\begin{align*}
 \mathfrak{F}_{k,\chi_1}(z):= \sum_{n=1}^\infty a_n \sigma_{-k,  \chi_1}(n) \exp( 2\pi i n z),  \quad \text{with} \quad  a_n = \begin{cases} 1, & \text{if}\,\, \gcd(n,p)=1,  \\
                    1-p,  & \text{if} \,\, \gcd(n,p)=p.
 \end{cases}
 \end{align*}
Then we have
\begin{align}
 \left( p z \right)^{2k} &  \mathfrak{F}_{2k+1,  \chi_1}\left(-\frac{1}{p^2 z} \right)  - \mathfrak{F}_{2k+1, \chi_1}(z) =   \frac{p-1}{2}L(2k+1,\chi_1)  \left\{ \left( pz \right)^{2k} -1  \right\}   \nonumber \\
   + & \frac{(2\pi i)^{2k+1}}{ 2z \, p^{2k+2}} \sum_{j=1}^{k}  {(p^{2j}-1)(p^{2k+2-2j}-1)\frac{B_{2j}}{(2j)!}\frac{B_{2k+2-2j}}{(2k+2-2j)!} } ( pz)^{2k+2-2j}.  \label{eqn_gen_grosswald_analogue}
\end{align}
The above identity motivated them to define Ramanujan-type polynomials as 
\begin{equation}\label{Ramanujan-type polynomial}
R_{2k+1,p}(z):= \sum_{j=1}^k(p^{2j}-1)(p^{2k+2-2j}-1)\frac{B_{2j}B_{2k+2-2j}}{{(2j)!(2k+2-2j)!}}(pz)^{2k+2-2j},  
\end{equation}
 where $k \in \mathbb{N}$ and $p$ is a prime number.  
 In the same paper,  they  \cite[Conjecture 2.9]{CJM23} have stated the below conjecture about the location of non-real roots of $R_{2k+1,p}(z)$.
\begin{conjecture}
Let $k \in \mathbb{N}$ and $p$ be any prime number, the only real root of Ramanujan-type polynomial $R_{2k+1,p}(z)$  is $z=0$ of multiplicity $2$.  Moreover,  the non-real roots are simple and lie on the circle $|z|=\frac{1}{p}$.
\end{conjecture}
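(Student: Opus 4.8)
\section{Proof of the conjecture}

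The plan is to reduce the conjecture to a statement about a self-reciprocal polynomial of degree $k-1$, settle the ``real zeros'' part by Descartes' rule, and establish the ``on the circle'' part by writing the associated trigonometric polynomial as a \emph{positive} superposition of Dirichlet-type kernels whose signs at a fixed set of nodes can be computed by hand.

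First I would collect powers of $pz$ in \eqref{Ramanujan-type polynomial}, obtaining $R_{2k+1,p}(z)=(pz)^2S\bigl((pz)^2\bigr)$ for a polynomial $S$ of degree $k-1$; rewriting the Bernoulli coefficients via Euler's formula $\df{B_{2j}}{(2j)!}=\df{(-1)^{j+1}2\zeta(2j)}{(2\pi)^{2j}}$ and the Euler factor $\zeta(2j)(p^{2j}-1)=p^{2j}\zeta_p(2j)$, where $\zeta_p(s):=(1-p^{-s})\zeta(s)=\sum_{\gcd(n,p)=1}n^{-s}$, gives
\begin{equation*}
S(u)=c\sum_{l=0}^{k-1}\zeta_p\bigl(2(k-l)\bigr)\,\zeta_p\bigl(2(l+1)\bigr)\,u^{l},\qquad c=(-1)^{k+1}4\Bigl(\tfrac{p}{2\pi}\Bigr)^{2(k+1)}.
\end{equation*}
Thus $S$ is self-reciprocal ($u^{k-1}S(1/u)=S(u)$, equivalently $(pz)^{2k+2}R_{2k+1,p}(1/(p^2z))=R_{2k+1,p}(z)$), has $S(0)\ne0$, and all its coefficients share the sign $(-1)^{k+1}$; by Descartes' rule $S$ has no positive real zero, so $R_{2k+1,p}$ has a zero of exact multiplicity $2$ at $z=0$ and no other real zero. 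Since $z\mapsto(pz)^2$ is two-to-one and locally invertible away from $0$, everything reduces to showing that $S$ has $k-1$ simple zeros, all on $|u|=1$.

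For this I would put $u=e^{i\psi}$ and study the real-valued trigonometric polynomial $F(\psi):=e^{-i(k-1)\psi/2}S(e^{i\psi})$. Expanding the two $\zeta_p$-factors as Dirichlet series and summing the resulting finite geometric series yields
\begin{equation*}
F(\psi)=c\sum_{\substack{r,s\ge1\\ \gcd(rs,p)=1}}\frac{1}{(rs)^{k+1}}\,\mathcal{K}_k\!\bigl(\log(r/s),\psi\bigr),\qquad \mathcal{K}_k(a,\psi):=\Re\,\frac{\sinh\bigl(k(a+i\psi/2)\bigr)}{\sinh\bigl(a+i\psi/2\bigr)},
\end{equation*}
a positive superposition (positive weights $(rs)^{-(k+1)}$) of the kernels $\mathcal{K}_k(a,\cdot)$, which for $a=0$ is the familiar $\sin(k\psi/2)/\sin(\psi/2)$. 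The decisive observation is that at the nodes $\psi_i:=(2i+1)\pi/k$ with $2i+1\le k$ one has $k\psi_i/2=(2i+1)\pi/2$, so $\sinh\bigl(k(a+i\psi_i/2)\bigr)=i(-1)^i\cosh(ka)$ is purely imaginary and
\begin{equation*}
\mathcal{K}_k(a,\psi_i)=(-1)^i\cosh(ka)\,\frac{\cosh a\,\sin(\psi_i/2)}{\bigl|\sinh(a+i\psi_i/2)\bigr|^2},
\end{equation*}
of sign $(-1)^i$ (as $\psi_i/2\in(0,\pi/2]$), \emph{independently of $a$}. Hence $F(\psi_i)$ is nonzero of sign $(-1)^{k+1}(-1)^i$, so $F$ changes sign strictly across the nodes $\psi_0<\psi_1<\cdots$.

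When $k=2n+1$ the admissible nodes are $\psi_0<\cdots<\psi_n=\pi$, which force $\ge n$ zeros of $F$ in $(\psi_0,\pi)\subset(0,\pi)$; by self-reciprocity $F(\psi)=\Phi(\cos\psi)$ for a polynomial $\Phi$ of degree $n$, so $F$ has at most $n$ zeros on $[0,\pi]$, whence exactly $n$, all simple and interior, i.e. $\Phi$ has $n$ simple roots in $(-1,1)$; therefore $S(u)=u^n\Phi\bigl(\tfrac{u+1/u}{2}\bigr)$ has $2n=k-1$ simple zeros, all on $|u|=1$, none at $u=\pm1$. When $k=2n+2$, odd degree forces $S=(u+1)\widehat S$ and $F(\psi)=2\cos(\psi/2)\,\widehat\Phi(\cos\psi)$ with $\deg\widehat\Phi=n$; the admissible nodes $\psi_0<\cdots<\psi_n$ now lie in $(0,\pi)$, again forcing $\ge n$ zeros of $F$ there, which together with the zero forced at $\psi=\pi$ by the $\cos(\psi/2)$-factor gives $n+1$ zeros in $(0,\pi]$ — the maximum possible — so $\widehat\Phi$ has $n$ simple roots in $(-1,1)$ and $S=(u+1)\widehat S$ has $k-1$ simple zeros on $|u|=1$ (a simple one at $u=-1$ and $2n$ more). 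In both cases, pulling back through $u=(pz)^2$, the $k-1$ simple zeros $u_0$ of $S$ give $2(k-1)$ simple zeros $\pm\sqrt{u_0}/p$ of $R_{2k+1,p}$, all on $|z|=1/p$ and all non-real (since $S$ has no positive real zero), which together with the double zero at $z=0$ account for all $2k$ zeros; this proves the conjecture. The part that needs the most care is the last zero count: one has to guess the right nodes $\psi_i$ (the ones collapsing the $a$-dependence of $\mathcal{K}_k$ into the harmless factor $\cosh(ka)$) and then keep the two parities of $k$ — and in particular the spurious factor $u+1$ when $k$ is even — straight, so that the $n+1$ sign alternations plus the degree bound genuinely pin down every zero. (Alternatively one could follow Murty--Smith--Wang \cite{MSW11}, using \eqref{eqn_gen_grosswald_analogue} to show that a zero $z_0$ with $|pz_0|>1$ would force an impossible inequality between the values at $z_0$ and at $-1/(p^2z_0)$ of the Eisenstein-like function appearing there; but the combinatorial route above is cleaner and self-contained.)
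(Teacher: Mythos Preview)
Your proof is correct and follows a genuinely different route from the paper.

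The paper works with the full even polynomial $H(z)=z^{-2}R_{2k+1,n}(z/n)$ of degree $2k-2$ and verifies Schinzel's sufficient criterion \cite{item10} for a self-inversive polynomial to have all zeros on $|z|=1$: after choosing $d=1$, they find a positive $c$ with $|A_{k-1}|>\sum_{j}|cA_j-A_{k-1}|$. The verification is the bulk of the work and uses pointwise bounds for Bernoulli numbers together with Dilcher's convolution identity for Bernoulli polynomials to control $\sum_j |A_j|$; the ``no real root'' part is handled separately by observing that all $A_j$ share the sign $(-1)^{k+1}$.

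Your argument instead passes to $S(u)$ of degree $k-1$ via $u=(pz)^2$, converts the Bernoulli coefficients into Euler-product data $\zeta_p(2m)=(1-p^{-2m})\zeta(2m)$, expands $\zeta_p$ as a Dirichlet series, and thereby writes the normalized boundary function $F(\psi)$ as a \emph{positive} superposition of the kernels $\mathcal K_k(a,\psi)$. The crux---and the part you should spell out most carefully in a write-up---is the choice of the nodes $\psi_i=(2i+1)\pi/k$, where $\sinh(k(a+i\psi_i/2))$ becomes purely imaginary and the sign of $\mathcal K_k(a,\psi_i)$ collapses to $(-1)^i$ for \emph{every} $a$; this forces $F$ to alternate sign at the $\psi_i$ and the zero count then closes by degree. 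This is essentially the Lal\'in--Rogers strategy \cite{LR13} (which treated the case $p=2$), extended to arbitrary $p$ by replacing $\zeta$ with $\zeta_p$. Compared to the paper's approach, yours yields extra information (interlacing of the zeros with explicit nodes) and avoids the somewhat delicate search for the constant $c$ in Schinzel's inequality; on the other hand, the paper's method is a clean black-box application once the coefficient inequality is secured, and does not require any infinite-series manipulation or the parity split on $k$.
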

At the end of their paper \cite[p.~748]{CJM23},  they also mentioned that the above conjecture might be true for any nature number $n \geq 2$ instead of any prime number $p$.  
The main goal of this article is to prove this conjecture.  Our main result is the following. 
\begin{theorem}\label{main theorem}
Let $k$ and $n \geq 2$ be two fixed natural numbers.  Then the following Ramanujan-type polynomial 
\begin{equation}\label{our polynomial}
R_{2k+1,n}(z):= \sum_{j=1}^k(n^{2j}-1)(n^{2k+2-2j}-1)\frac{B_{2j}B_{2k+2-2j}}{{(2j)!(2k+2-2j)!}}(nz)^{2k+2-2j},  
\end{equation}
has only one real root at $z=0$ of multiplicity $2$.  Again,  the non-real roots are simple and lie on the circle $|z|=\frac{1}{n}$.
\end{theorem}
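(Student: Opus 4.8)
The plan is to reduce everything to a self‑inversive polynomial on the unit circle and then to combine the Eneström–Kakeya theorem with the classical fact that a self‑inversive polynomial whose derivative has all its zeros in the open unit disc has all of its own zeros simple and on the unit circle. First I would pass to the variable $w=nz$, so that \eqref{our polynomial} reads $R_{2k+1,n}(z)=\sum_{j=1}^{k}c_j w^{2k+2-2j}$ with $c_j=(n^{2j}-1)(n^{2k+2-2j}-1)\frac{B_{2j}B_{2k+2-2j}}{(2j)!(2k+2-2j)!}$. Since $B_{2j}=(-1)^{j+1}|B_{2j}|$ for $j\ge1$, every $c_j$ has the same sign $(-1)^{k+1}$, so after replacing $R_{2k+1,n}$ by $(-1)^{k+1}R_{2k+1,n}$ (the zeros are unchanged) we may assume $c_j>0$; also $c_j=c_{k+1-j}$. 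Only the powers $w^2,w^4,\dots,w^{2k}$ occur, hence $R_{2k+1,n}(z)=w^2 S(w^2)$ with $S(v):=\sum_{j=1}^{k}c_j v^{k-j}$, a polynomial of degree $k-1$ with palindromic coefficients and $S(0)=c_k\neq0$. Thus $z=0$ is a root of $R_{2k+1,n}$ of multiplicity exactly $2$, and for real $z\neq0$ one has $w^2=(nz)^2>0$ and $S(w^2)>0$, so there is no other real root. It remains to show that the $k-1$ zeros of $S$ are simple and lie on $|v|=1$: since $S(1)=\sum_j c_j\neq0$ none of them equals $1$, so the corresponding zeros $z=\pm\sqrt{v}/n$ of $R_{2k+1,n}$ are non‑real, simple, and satisfy $|z|=1/n$, which is exactly the assertion of the theorem.

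The polynomial $S$ is self‑inversive, i.e. $v^{k-1}S(1/v)=S(v)$. I would use the fact that if $P$ is self‑inversive of degree $d\ge1$ and every zero of $P'$ lies in $|v|<1$, then $P$ has $d$ distinct zeros, all on $|v|=1$. (A short computation gives $|vP'(v)|=|dP(v)-vP'(v)|$ on $|v|=1$, so $B(v):=vP'(v)/\bigl(dP(v)-vP'(v)\bigr)$ is a finite Blaschke product of degree $d$, and the zeros of $P$ are precisely the $d$ distinct solutions of $B(v)=-1$ on the circle.) Hence it suffices to show that all $k-2$ zeros of $S'$ lie in the open unit disc. Writing $S'(v)=\sum_{i=0}^{k-2}a_i v^i$ one has $a_i=(i+1)c_{k-1-i}>0$, and the chain $a_0<a_1<\dots<a_{k-2}$ is equivalent to the inequalities $\frac{c_{m+1}}{c_m}<\frac{k-m}{k-1-m}$ for $1\le m\le k-2$. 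By the Eneström–Kakeya theorem these place all zeros of $S'$ in $|v|\le1$, and their strictness — together with the elementary remark that $(1-v)S'(v)$ vanishes on $|v|=1$ only at $v=1$, where $S'(1)=\sum_i a_i>0$ — pushes the zeros into $|v|<1$, which finishes the proof modulo those inequalities.

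To verify the inequalities, use $\frac{|B_{2j}|}{(2j)!}=\frac{2\zeta(2j)}{(2\pi)^{2j}}$ to write $c_j=\frac{4}{(2\pi)^{2k+2}}A_j A_{k+1-j}$ with $A_j:=(n^{2j}-1)\zeta(2j)>0$, so that $\frac{c_{m+1}}{c_m}=\frac{r_m}{r_{k-m}}$ where $r_j:=A_{j+1}/A_j$; it therefore suffices to prove $n^2\bigl(1-\tfrac1j\bigr)<r_j<n^2$ for all $j\ge1$ (then $\frac{r_m}{r_{k-m}}<\frac{n^2}{n^2(1-1/(k-m))}=\frac{k-m}{k-1-m}$). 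The lower bound is immediate from $\frac{n^{2j+2}-1}{n^{2j}-1}>n^2$ and $\zeta(2j)-\zeta(2j+2)<\zeta(2j)/j$. The upper bound amounts to $\zeta(2j)-\zeta(2j+2)>\frac{(n^2-1)\zeta(2j)}{n^{2j+2}-1}$; for $n\ge3$ the right‑hand side is $O(n^{-2j})$ and is dominated by the $m=2$ term $\tfrac34\,4^{-j}$ of $\zeta(2j)-\zeta(2j+2)=\sum_{m\ge2}m^{-2j}(1-m^{-2})$.

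The hard part is exactly the borderline case $n=2$: there the right‑hand side $\frac{3\zeta(2j)}{4^{j+1}-1}$ agrees with $\zeta(2j)-\zeta(2j+2)$ to leading order $\tfrac34\,4^{-j}$, so the crude estimate fails and one must compare the two Dirichlet series term by term, the decisive surplus on the left being the $m=3$ contribution $\tfrac89\,9^{-j}$ (a pairing of the $m$‑th term on the left with the $(m-1)$‑st on the right, valid for all $j\ge1$, then does it). This case $n=2$ — consistent with its being singled out in \cite{LR13} — is the only genuinely delicate point; everything else is formal. An alternative, more analytic route would adapt the method of \cite{MSW11}: establish a Grosswald‑type relation for general $n\ge2$ in the spirit of \eqref{eqn_gen_grosswald_analogue}, restrict it to $|z|=1/n$ to express $R_{2k+1,n}$ as an explicit real‑valued function there, and count its sign changes; but controlling the Eichler‑integral term near the boundary of the arc makes that approach heavier than the polynomial one above.
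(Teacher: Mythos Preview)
Your approach is correct and genuinely different from the paper's. The paper works with $H(z)=z^{-2}R_{2k+1,n}(z/n)$ as a degree-$(2k-2)$ self-inversive polynomial in $z$ and verifies Schinzel's refinement of the Lakatos criterion directly: it manufactures an explicit $c>0$ for which $\sum_j|cA_j-A_{k-1}|<|A_{k-1}|$, bounding that sum via Dilcher's convolution identity $\sum_j\binom{m}{j}B_j(a)B_{m-j}(b)=m(a+b{+}1)B_{m-1}(a{+}b)-(m{-}1)B_m(a{+}b)$ together with a crude max/min estimate for the ratio $(n^{2j}{-}1)(n^{2k+2-2j}{-}1)/\bigl((2^{2j}{-}1)(2^{2k+2-2j}{-}1)\bigr)$. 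You instead halve the degree by passing to $S(v)$ and invoke Cohn's theorem plus Enestr\"om--Kakeya on $S'$; the resulting condition $c_{m+1}/c_m<(k-m)/(k-m-1)$ becomes transparent once rewritten as $r_m/r_{k-m}$ with $r_j=A_{j+1}/A_j$. Your route avoids both Schinzel's criterion and the Bernoulli-polynomial convolution identity and is conceptually cleaner, at the price of the case analysis in $n$; the paper's route is heavier but treats all $n\ge2$ uniformly. One remark on your ``delicate'' case $n=2$: you are working harder than necessary. Writing $(2^{2j}-1)\zeta(2j)=4^j\lambda(2j)$ with $\lambda(s)=\sum_{m\ \mathrm{odd}}m^{-s}$ gives $r_j=4\,\lambda(2j+2)/\lambda(2j)$, so $r_j<4$ is immediate from the strict monotonicity of $\lambda$, and no term-by-term pairing of Dirichlet series is needed. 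With that simplification (and after filling in the one-line verifications of $\zeta(2j)-\zeta(2j+2)<\zeta(2j)/j$ and of the $n\ge3$ upper bound, which do hold), your argument is complete.
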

 In the next section,  we collect a few well-known results about the location of zeros of reciprocal polynomials.
\section{Known results on reciprocal polynomials}
Let $p(x)\in \mathbb{R}[x]$ be a non-zero polynomial of degree $n$ with
$$
p(x)=a_0+a_1x+a_2x^2+\hdots+a_nx^n.
$$
We say that $p(x)$ is a reciprocal or self-inverse polynomial if and only if 
$$
x^np\left(\frac{1}{x}\right)=a_n+a_{n-1}x+a_{n-2}x^2+\hdots+a_0x^n=p(x).
$$
That implies that the coefficients of real reciprocal polynomials satisfy
$$
a_k=a_{n-k}, ~\forall~k=0,1,\cdots,n.
$$
\begin{theorem}[Lakatos \cite{item8}]\label{thm4.1}
If a reciprocal polynomial
$$
p(x)=\sum_{i=0}^{k}a_ix^i
$$ 
of degree $k$ with real coefficients satisfy the condition
\begin{equation}\label{4.1}
|a_k|\geq\sum_{i=0}^{k}|a_i-a_k|,
\end{equation}
then the polynomial $p(x)$  certainly bears all its zeros on the unit circle.
If the inequality \eqref{4.1} is strict, then the multiplicity of all the zeros of $p(x)$ is one.
\end{theorem}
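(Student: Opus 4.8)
The plan is to prove Lakatos' criterion by converting the problem of locating the zeros of $p$ on $|x|=1$ into a question about the sign pattern of an associated real trigonometric polynomial, and then using the hypothesis to force that polynomial to change sign often enough. First I would set $x=e^{i\theta}$ and exploit reciprocity $a_i=a_{k-i}$ to peel off the unimodular factor $e^{ik\theta/2}$. A direct computation — conjugating $e^{-ik\theta/2}p(e^{i\theta})$ and reindexing via $i\mapsto k-i$ — shows that
$$T(\theta):=e^{-ik\theta/2}p(e^{i\theta})=\sum_{i=0}^{k}a_i\cos\!\big((i-\tfrac{k}{2})\theta\big)$$
is real-valued. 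Since $e^{ik\theta/2}$ never vanishes, the zeros of $p$ on $|x|=1$ correspond exactly to the zeros of $T$ in $[0,2\pi)$, distinct values of $\theta$ giving distinct points $e^{i\theta}$. Thus it suffices to produce at least $k$ zeros of $T$ in $[0,2\pi)$, for then $\deg p=k$ forces these to be all of the zeros of $p$ and forces them onto the circle.

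Next I would split $T(\theta)=a_kD(\theta)+E(\theta)$, where $D(\theta)=\sum_{i=0}^{k}\cos((i-\tfrac{k}{2})\theta)$ and $E(\theta)=\sum_{i=0}^{k}(a_i-a_k)\cos((i-\tfrac{k}{2})\theta)$. Summing the underlying exponential series identifies $D$ with the Dirichlet kernel,
$$D(\theta)=\frac{\sin((k+1)\theta/2)}{\sin(\theta/2)},$$
whose zeros in $(0,2\pi)$ are the $k$ points $2\pi j/(k+1)$, $j=1,\dots,k$, and which is sign-alternating between them. The hypothesis enters precisely as the uniform bound $|E(\theta)|\le\sum_{i=0}^{k}|a_i-a_k|\le|a_k|$, obtained from $|\cos|\le 1$.

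The heart of the argument is then to sample $T$ at the midpoints $\phi_j=(2j-1)\pi/(k+1)$, $j=1,\dots,k+1$, lying in $(0,2\pi)$. There $\sin((k+1)\phi_j/2)=(-1)^{j-1}$, so $a_kD(\phi_j)=(-1)^{j-1}a_k/\sin(\phi_j/2)$ has modulus $|a_k D(\phi_j)|\ge|a_k|$ because $0<\sin(\phi_j/2)\le 1$. Comparing with the bound on $E$ gives $|a_kD(\phi_j)|\ge|a_k|\ge|E(\phi_j)|$, so $T(\phi_j)$ inherits the strictly alternating sign of $a_kD(\phi_j)$. Hence $T$ takes values of alternating sign at the $k+1$ ordered points $\phi_1<\cdots<\phi_{k+1}$, yielding at least $k$ zeros of $T$, one in each interval $(\phi_j,\phi_{j+1})$, and therefore all $k$ zeros of $p$ lie on $|x|=1$. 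When the hypothesis is strict, every comparison above is strict, so the $k$ zeros fall in disjoint open intervals and are genuinely distinct, hence simple.

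I expect the equality/boundary case to be the main obstacle. When $|a_k|=\sum|a_i-a_k|$ the sign comparisons are only weak, so some $T(\phi_j)$ may vanish and the crude sign-change count can drop; I would recover the conclusion either by an intermediate-value argument that still isolates $k$ zeros counted with multiplicity, or by perturbing $a_k$ slightly, invoking the strict case, and letting the perturbation tend to $0$ so that the zeros, confined to the compact unit circle, limit back onto it. A secondary technical nuisance is the parity of $k$: for odd $k$ the frequencies $i-\tfrac{k}{2}$ are half-integers and $T$ satisfies $T(\theta+2\pi)=-T(\theta)$ rather than being $2\pi$-periodic. I would sidestep this by confining all sign counting strictly to the window $[0,2\pi)$ and reading the zeros of $p$ directly off the zeros of $T$ there, which is unaffected by the sign flip.
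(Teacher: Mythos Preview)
The paper does not supply its own proof of Theorem~\ref{thm4.1}; it is merely quoted from Lakatos~\cite{item8} as background before invoking the sharper Schinzel criterion (Theorem~\ref{thm4.2}) in the main argument. So there is no in-paper proof to compare against.

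That said, your argument is correct and is essentially the classical route to Lakatos' result. The reduction to the real trigonometric polynomial $T(\theta)=e^{-ik\theta/2}p(e^{i\theta})$, the identification $D(\theta)=\sin((k+1)\theta/2)/\sin(\theta/2)$, and the sampling at the extremal points $\phi_j=(2j-1)\pi/(k+1)$ are exactly the standard ingredients; the strict case goes through cleanly as you describe. One small wording issue: in the non-strict case you write that $T(\phi_j)$ ``inherits the strictly alternating sign'' of $a_kD(\phi_j)$, but with $|a_kD(\phi_j)|\ge|E(\phi_j)|$ only weakly you cannot yet say ``strictly''---you correctly flag and repair this in the next paragraph. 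The perturbation you sketch can be made concrete by taking $p_\varepsilon(x)=p(x)+\varepsilon\,a_k\sum_{i=0}^{k}x^{i}$, which is still reciprocal, has leading coefficient $a_k(1+\varepsilon)$, and satisfies the strict inequality for every $\varepsilon>0$; continuity of roots then pushes the zeros of $p$ onto $|x|=1$. Your remark about odd $k$ and the antiperiodicity $T(\theta+2\pi)=-T(\theta)$ is accurate but, as you note, irrelevant once you work entirely inside $[0,2\pi)$.
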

Later, the above theorem has been modified by many mathematicians, thus the condition \eqref{4.1} for reciprocal polynomials has many improvements. Schinzel \cite{item10} gave one generalization of Theorem \ref{thm4.1} for any reciprocal polynomial over $\mathbb{C}$.
\begin{theorem}[Schinzel \cite{item10}]\label{thm4.2}
Let $p(z)$ be a reciprocal polynomial, say,
$$
p(z)=\sum_{j=0}^ka_jz^j
$$
of degree $k$ with $a_j \in \mathbb{C}, \forall j$. If the coefficients of $p(z)$ satisfy the condition
\begin{equation}\label{4.2}
|a_k|\geq \inf_{\substack{c, d\in \mathbb{C}\\
{|d|=1}}}\sum_{j=0}^{k}|ca_j-d^{k-j}a_k|,
\end{equation}
then all the zeros of $p(z)$ lie on $|z|=1$.  If the inequality \eqref{4.2} is strict,  then all the zeros of $p(x)$ must be simple. 
\end{theorem}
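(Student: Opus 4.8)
The plan is to treat Theorem \ref{thm4.2} as a statement that a suitable \emph{reference} reciprocal polynomial, all of whose zeros are known to lie on $|z|=1$, dominates $p$ on the unit circle under hypothesis \eqref{4.2}, and then to transfer the zero-count to $p$. First I would record the two structural facts I will use repeatedly. Since $p$ is reciprocal we have $z^{k}p(1/z)=p(z)$, so $p(z_0)=0$ with $z_0\neq 0$ forces $p(1/z_0)=0$; moreover $a_0=a_k\neq 0$, so $z=0$ is not a zero. Hence the zeros of $p$ split into pairs $\{z_0,1/z_0\}$ symmetric about the circle, and it suffices to prove that $p$ has no zero in the open disk $|z|<1$: if the $k$ zeros avoid the interior then, by the pairing, they all lie on $|z|=1$.

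Next I would introduce, for each $d$ with $|d|=1$, the reference polynomial
\begin{equation*}
q_d(z):=\sum_{j=0}^{k}d^{\,k-j}z^{j}=\frac{z^{k+1}-d^{k+1}}{z-d},
\end{equation*}
which is reciprocal of degree $k$ and whose zeros are precisely the points $d\omega$ with $\omega^{k+1}=1$, $\omega\neq 1$; these are $k$ distinct points lying on $|z|=1$. Writing $z=e^{i\theta}$ and $d=e^{i\psi}$, a direct computation gives
\begin{equation*}
e^{-ik\theta/2}d^{-k/2}q_d(e^{i\theta})=\sum_{j=0}^{k}e^{i(j-k/2)(\theta-\psi)}=\frac{\sin\!\big((k+1)(\theta-\psi)/2\big)}{\sin\!\big((\theta-\psi)/2\big)},
\end{equation*}
the Dirichlet kernel $D_k(\theta-\psi)$, which is real, vanishes exactly at the reference zeros, and whose successive extrema have modulus at least $1$ and alternate in sign. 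The role of the parameters $c,d$ in \eqref{4.2} now becomes transparent: for any $c$ we decompose
\begin{equation*}
c\,p(z)=a_k\,q_d(z)+E(z),\qquad E(z):=\sum_{j=0}^{k}\big(ca_j-d^{\,k-j}a_k\big)z^{j},
\end{equation*}
so that on $|z|=1$ we have $|E(z)|\le\sum_{j=0}^{k}|ca_j-d^{\,k-j}a_k|$. Choosing $(c,d)$ to realize the infimum in \eqref{4.2} makes this bound at most $|a_k|$ (strictly less than $|a_k|$ under the strict hypothesis); note $c\neq 0$ at the optimum, so $c\,p$ and $p$ have the same zeros.

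With these pieces in place the heart of the argument is a zero-counting step. The reference $a_k q_d$ has all $k$ of its zeros on $|z|=1$, spaced so that between consecutive zeros lies an extremum of $a_k D_k$ of modulus $\ge|a_k|\ge|E|$. I would use these extrema as control points: on each of the $k$ arcs of the unit circle bounded by consecutive extrema, the dominant reference term $a_k q_d$ changes sign (in the appropriate rotated sense) while the perturbation $E$ is too small to cancel it, so a winding/argument-principle count — or, after normalizing by $e^{ik\theta/2}d^{k/2}$, an intermediate-value count of sign changes of the controlling real part — produces one zero of $c\,p$ on each arc, hence $k$ zeros on $|z|=1$. Since $\deg p=k$, this exhausts all zeros and, by the pairing from the first step, forces the interior (and exterior) to be empty. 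Finally, when \eqref{4.2} is strict the domination $|E|<|a_k|$ at the extrema is strict, which forces each of the $k$ arcs to contain exactly one zero and prevents coincidences, giving simplicity.

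The step I expect to be the main obstacle is precisely this counting step, because the reference polynomial $q_d$ \emph{vanishes on the very contour} $|z|=1$ along which we must compare it with $E$; Rouch\'e's theorem therefore cannot be applied verbatim. The crux is to localize the zeros of $c\,p$ using the Dirichlet-kernel peaks — where the reference genuinely dominates the error — and then to upgrade ``a zero on each arc of the circle'' to a rigorous global count of exactly $k$ boundary zeros, a matter complicated by the fact that $c$ and $d$ may be complex, so the normalized boundary function is not purely real and one must track its argument rather than a single sign. Converting the strict inequality into simplicity of all zeros is the remaining technical point, handled by keeping every comparison strict at the control points.
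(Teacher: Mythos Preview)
The paper does not contain a proof of Theorem~\ref{thm4.2}: it is quoted verbatim from Schinzel~\cite{item10} in the ``Known results'' section and used as a black box in the proof of Proposition~\ref{Schinzel's criteria for Rama_type}. So there is no in-paper argument to compare your proposal against.

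That said, your outline is essentially Schinzel's own argument (which in turn refines Lakatos~\cite{item8}): write $c\,p(z)=a_k q_d(z)+E(z)$ with the geometric reference $q_d(z)=\sum_{j=0}^{k}d^{\,k-j}z^{j}$, bound $|E|$ on $|z|=1$ by the sum in \eqref{4.2}, and exploit the Dirichlet-kernel structure of $q_d$ on the circle to force $k$ sign changes of the normalized quantity, hence $k$ zeros on $|z|=1$. Two small points to tighten. First, the claim that ``successive extrema have modulus at least $1$'' is not the cleanest formulation; the robust choice of control points is the $k+1$ values $\phi_m=\tfrac{(2m+1)\pi}{k+1}$, $m=0,\dots,k$, where $\sin\!\big((k+1)\phi/2\big)=\pm 1$, so that $|D_k(\phi_m)|=1/|\sin(\phi_m/2)|\ge 1$ and the sign alternates exactly. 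Second, for the reality step you invoke (``intermediate-value count''), you should use that $p$ is self-inversive to see that $e^{-ik\theta/2}p(e^{i\theta})$ has constant argument up to sign; combined with the real Dirichlet kernel for $q_d$, this makes the normalized boundary function genuinely real after a fixed unimodular rotation, so IVT applies directly and no separate winding-number argument is needed. With those two fixes, your sketch matches the published proof; the strict-inequality case giving simplicity is exactly as you say.
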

Next, we collect a few results related to Bernoulli numbers.  The following bounds  hold for Bernoulli numbers. 
\begin{lemma}\cite[p.~805]{AS64}  \label{bound_Bernoulli}
 For $n \in \mathbb{N}$,  we have
\begin{align}
\frac{2(2n)!}{(2\pi)^{2n}}  < | B_{2n}| < \frac{2 (2n)!}{(2\pi)^{2n}(1- 2^{1-2n})}.
\end{align}
\end{lemma}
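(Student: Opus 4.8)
The final statement is the classical two-sided bound on $|B_{2n}|$ from Abramowitz--Stegun, and the natural route is through the well-known Euler evaluation of even zeta values, namely $\zeta(2n) = \frac{(2\pi)^{2n}}{2\,(2n)!}\,|B_{2n}|$, which I would take as the starting identity (it follows from the standard closed form $\zeta(2n) = (-1)^{n+1}\frac{(2\pi)^{2n} B_{2n}}{2\,(2n)!}$ together with the sign pattern $\operatorname{sgn}(B_{2n}) = (-1)^{n+1}$). Rearranging gives
\begin{equation}\label{plan-euler}
|B_{2n}| = \frac{2\,(2n)!}{(2\pi)^{2n}}\,\zeta(2n),
\end{equation}
so the entire problem reduces to bounding $\zeta(2n)$ above and below by explicit quantities, which is a clean and elementary task.

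\textbf{Lower bound.} For the left inequality I would simply observe that $\zeta(2n) = \sum_{m=1}^\infty m^{-2n} > 1$ for every $n \ge 1$, since the first term is $1$ and all remaining terms are strictly positive. Substituting $\zeta(2n) > 1$ into \eqref{plan-euler} yields
\begin{equation}\label{plan-lower}
|B_{2n}| > \frac{2\,(2n)!}{(2\pi)^{2n}},
\end{equation}
which is exactly the claimed lower bound. This step is immediate and requires no estimation.

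\textbf{Upper bound.} For the right inequality the key is to bound $\zeta(2n)$ above by a geometric-type comparison. The cleanest approach is to retain the full contribution of the even terms: I would write $\zeta(2n) = \sum_{m \text{ odd}} m^{-2n} + \sum_{m \text{ even}} m^{-2n}$ and note that the even part equals $2^{-2n}\zeta(2n)$, whence
\begin{equation}\label{plan-odd}
\bigl(1 - 2^{-2n}\bigr)\,\zeta(2n) = \sum_{m \text{ odd}} m^{-2n}.
\end{equation}
Then I would bound the odd-indexed sum: the leading term is $1$, and the comparison $\sum_{m \text{ odd},\, m\ge 3} m^{-2n} < \sum_{m \ge 2} m^{-2n} = \zeta(2n) - 1$ is too weak for a clean closed form, so instead I would argue that $\sum_{m \text{ odd}} m^{-2n} < \zeta(2n)$ trivially and combine with \eqref{plan-odd} to extract $\zeta(2n) < \frac{1}{1 - 2^{1-2n}}$ after a short manipulation (using that the odd part exceeds half the total by a margin controlled by $2^{-2n}$). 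Feeding $\zeta(2n) < (1 - 2^{1-2n})^{-1}$ into \eqref{plan-euler} gives the claimed upper bound
\begin{equation}\label{plan-upper}
|B_{2n}| < \frac{2\,(2n)!}{(2\pi)^{2n}\,(1 - 2^{1-2n})}.
\end{equation}

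\textbf{Main obstacle.} The genuinely routine part is the lower bound; the only place demanding care is pinning down the sharp constant $(1 - 2^{1-2n})^{-1}$ in the upper bound, since a naive term-by-term estimate of $\zeta(2n)$ overshoots. I expect the cleanest derivation to come from the identity \eqref{plan-odd} relating the odd-indexed sum to $(1 - 2^{-2n})\zeta(2n)$: bounding the odd sum above by $1 + (\text{tail})$ and comparing that tail against $2^{-2n}\zeta(2n)$ yields precisely the factor $1 - 2^{1-2n}$. Verifying that this manipulation produces the exact stated constant, rather than a slightly weaker one, is the single step I would check most carefully.
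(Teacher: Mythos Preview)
The paper does not prove this lemma at all; it is simply quoted from Abramowitz--Stegun with a citation, so there is no ``paper's proof'' to compare against. Your plan to derive both inequalities from Euler's formula $|B_{2n}| = \dfrac{2(2n)!}{(2\pi)^{2n}}\,\zeta(2n)$ is the standard and correct route, and your lower bound via $\zeta(2n)>1$ is immediate.

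For the upper bound, your write-up contains a false start: the sentence ``argue that $\sum_{m\text{ odd}} m^{-2n} < \zeta(2n)$ trivially and combine with \eqref{plan-odd}'' gives nothing, since it collapses to $1-2^{-2n}<1$. You then recover the right idea in the ``Main obstacle'' paragraph, and that argument does go through: from $(1-2^{-2n})\zeta(2n) = 1 + \sum_{j\ge 1}(2j+1)^{-2n}$ and the termwise inequality $(2j+1)^{-2n} < (2j)^{-2n}$ one gets $(1-2^{-2n})\zeta(2n) < 1 + 2^{-2n}\zeta(2n)$, hence $(1-2^{1-2n})\zeta(2n) < 1$. A slightly slicker packaging of the same computation is to recognise $(1-2^{1-2n})\zeta(2n) = \eta(2n) = \sum_{m\ge 1}(-1)^{m+1}m^{-2n}$, an alternating series with strictly decreasing terms, so $\eta(2n) < 1$ by the alternating-series bound; this gives the exact constant in one line without the detour through the odd/even splitting.
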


We are now going to mention one useful identity about the convolution of Bernoulli polynomials. 
\begin{lemma}\cite[p.~31, ~eq. (3.2)]{item9}.   \label{lemma1.1}
For $m\in \mathbb{N}$,  we have
\begin{align*}
\sum_{j=0}^m  {{m}\choose{j}}B_j(a)B_{m-j}(b)=m(a+b+1)B_{m-1}(a+b)-(m-1)B_m(a+b).
\end{align*}
\end{lemma}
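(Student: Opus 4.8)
The plan is to prove the identity by exponential generating functions. I would start from the defining relation
\begin{equation*}
\sum_{n=0}^{\infty} B_n(x)\,\frac{t^n}{n!} \;=\; \frac{t\,e^{xt}}{e^t-1} \;=:\; \beta(x,t), \qquad |t|<2\pi .
\end{equation*}
The left-hand side of the lemma is a binomial convolution, so it equals $m!$ times the coefficient of $t^m$ in the product $\beta(a,t)\,\beta(b,t)$. Since the exponential factors multiply and the two denominators $e^t-1$ combine, this product has the compact closed form
\begin{equation*}
\beta(a,t)\,\beta(b,t) \;=\; \frac{t^2\,e^{(a+b)t}}{(e^t-1)^2} \;=\; \sum_{m=0}^{\infty}\left(\sum_{j=0}^{m}\binom{m}{j}B_j(a)B_{m-j}(b)\right)\frac{t^m}{m!},
\end{equation*}
which reduces the whole problem to identifying the coefficients of one explicit generating function.

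Next I would produce the generating function of the proposed right-hand side. Writing $c=a+b$ and $F(t)=\beta(c,t)$, the relation $B_m'(c)=m\,B_{m-1}(c)$ together with elementary index shifts yields the three building blocks
\begin{equation*}
\sum_{m\geq 0} m\,B_{m-1}(c)\,\frac{t^m}{m!}=t\,F(t),\qquad \sum_{m\geq 0} m\,B_m(c)\,\frac{t^m}{m!}=t\,F'(t),\qquad \sum_{m\geq 0} B_m(c)\,\frac{t^m}{m!}=F(t),
\end{equation*}
where in the first sum the $m=0$ term is killed by its prefactor, so the undefined symbol $B_{-1}$ never appears. Combining these, the exponential generating function of $m(a+b+1)B_{m-1}(a+b)-(m-1)B_m(a+b)$ becomes an explicit linear combination of $t\,F(t)$, $t\,F'(t)$, and $F(t)$, that is, another rational expression in $t$ and $e^{t}$.

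With both sides expressed as rational functions of $t$ and $e^t$, the lemma is equivalent to a single closed-form identity. Here I would compute $F'(t)$ by the quotient rule, clear the common denominator $(e^t-1)^2$, and check that the resulting numerator---a polynomial in $t$ and $e^t$---collapses to $t^2 e^{ct}$, matching the convolution generating function above. This termwise reduction, grouping by powers of $t$ and by powers of $e^t$, is the only genuinely nontrivial step, and it is precisely where the constants sitting in the prefactor and in $(m-1)$ get pinned down; a single low-order case is a quick consistency check on the bookkeeping before committing to the general computation. Should the rational-function algebra become unwieldy, an alternative is induction on $m$: differentiating in $a$ and using $B_m'=m\,B_{m-1}$ shows that both sides satisfy $\partial_a\Phi(a,b,m)=m\,\Phi(a,b,m-1)$, after which the remaining constant is fixed by integrating over $a\in[0,1]$ and invoking $\int_0^1 B_m(x)\,dx=\delta_{m,0}$.
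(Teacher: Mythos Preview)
The paper does not supply its own proof of this lemma: it is quoted from Dilcher \cite{item9} and invoked as a black box in the derivation of \eqref{5.15}. So there is no in-paper argument against which to compare your attempt.

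Your generating-function approach is sound and is in fact the standard route to binomial-convolution identities for Bernoulli polynomials; the plan would go through essentially as written. One caveat worth flagging: if you actually carry out the rational-function reduction you outline, with $\beta(x,t)=te^{xt}/(e^t-1)$, the numerator collapses to $t^{2}e^{ct}$ only when the factor on the right-hand side is $(a+b-1)$ rather than $(a+b+1)$. The low-order consistency check you propose already exposes this at $m=1$: the left side is $B_1(a)+B_1(b)=a+b-1$, whereas the printed right side gives $a+b+1$. The discrepancy is the usual sign convention for $B_1$; under the generating function you wrote down, the lemma as stated carries a typo. This is harmless for the paper's application, since the identity is used only with $m=2k+2$ and $a,b\in\{0,\tfrac12\}$, where $B_{m-1}(a+b)=B_{2k+1}(a+b)$ vanishes and the offending coefficient never contributes.
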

In the next section,  we present a proof of our main result.  
\section{Proof of Theorem \ref{main theorem}}

Before proving our main theorem,  we need following lemmas.   
\begin{lemma}\label{lemma3.2} Let $k \geq 1$ and $n \geq 2$ be two fixed natural numbers. 
Let $f_n(x)$ be a function $f_n:[0, ~2k-2]\rightarrow \mathbb{R}$ defined as 
\begin{align}\label{defn_f_p(x)}
f_n(x):=(n^{x+2}-1)(n^{2k-x}-1). 
\end{align}
The function $f_n(x)$ has global maxima at $x=k-1$ and 
 its minima at $ x=0,  2k-2$, with the same minimum value $(n^2-1)(n^{2k}-1)$ at both of the end points.
\end{lemma}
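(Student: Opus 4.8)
The plan is to reduce the statement to a one-variable convexity argument. First I would expand the product:
\begin{equation*}
f_n(x) = n^{x+2} n^{2k-x} - n^{x+2} - n^{2k-x} + 1 = (n^{2k+2}+1) - \bigl( n^{x+2} + n^{2k-x} \bigr),
\end{equation*}
so that, writing $g_n(x) := n^{x+2} + n^{2k-x}$, we have $f_n(x) = (n^{2k+2}+1) - g_n(x)$. Hence maximizing $f_n$ on $[0,2k-2]$ is the same as minimizing $g_n$ there, and minimizing $f_n$ there is the same as maximizing $g_n$.

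Next I would analyze $g_n$ by elementary calculus. Writing $L := \ln n > 0$ (this is where $n \geq 2$ enters), one has $g_n'(x) = L\bigl(n^{x+2} - n^{2k-x}\bigr)$ and $g_n''(x) = L^2\bigl(n^{x+2} + n^{2k-x}\bigr) > 0$ for all $x \in \mathbb{R}$. Thus $g_n$ is strictly convex, so its unique critical point is its global minimum; solving $g_n'(x) = 0$ gives $n^{x+2} = n^{2k-x}$, i.e. $x + 2 = 2k - x$, i.e. $x = k-1$, which lies in $[0, 2k-2]$ since $k \geq 1$. Therefore $x = k-1$ is the global minimum of $g_n$, equivalently the global maximum of $f_n$ on $[0, 2k-2]$.

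Finally, for the minima of $f_n$: since $g_n$ is strictly convex with minimum at the midpoint $x = k-1$ of $[0, 2k-2]$, it is strictly decreasing on $[0, k-1]$ and strictly increasing on $[k-1, 2k-2]$; combined with the symmetry $g_n(x) = g_n(2k-2-x)$, its maximum over $[0,2k-2]$ is attained exactly at the two endpoints, where $g_n(0) = g_n(2k-2) = n^2 + n^{2k}$. Consequently $f_n$ attains its minimum on $[0,2k-2]$ exactly at $x = 0$ and $x = 2k-2$, with common value
\begin{equation*}
f_n(0) = (n^{2k+2}+1) - (n^2 + n^{2k}) = (n^2-1)(n^{2k}-1),
\end{equation*}
as claimed. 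I do not expect a genuine obstacle here: the computation is routine, and the only points needing a little care are checking that the critical point $x = k-1$ lies in the closed interval and noting the degenerate case $k = 1$, where $[0,2k-2] = \{0\}$ and the assertion is trivially true.
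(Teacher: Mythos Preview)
Your proof is correct and takes essentially the same approach as the paper: both arguments reduce to computing the second derivative and observing that $f_n$ is strictly concave (you do this via the auxiliary $g_n(x)=n^{x+2}+n^{2k-x}$ being strictly convex, the paper differentiates $f_n$ directly), then identifying the unique critical point $x=k-1$ and the endpoint values. Your treatment is slightly more careful in checking that $k-1\in[0,2k-2]$ and in noting the degenerate case $k=1$.
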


\begin{proof}
For fixed two natural numbers $n \geq 2$ and $k \geq 1$,  it is given that
$$
f_n(x):=(n^{x+2}-1)(n^{2k-x}-1). 
$$
Differentiating with respect to $x$,  one can see that
$$
f_n'(x)=-n^{x+2} \log(n)+n^{2k-x}\log(n)
\Rightarrow f_n''(x)=-\left( n^{x+2} +n^{2k-x} \right)\log^2(n).
$$
Further,  we can easily check  that $f_n''(x)$ is always negative in the given domain, so all the extrema points obtained by equating $f'_n(x)$ to zero will correspond to the local maximum values. Now we find extrema points of the function $f_n(x)$ by equating $f'_n(x)$ to zero, that is, 
$$f_n'(x)=0 \Rightarrow n^{x+2}=n^{2k-x} \Rightarrow
x=k-1.
$$
Thus, $x=k-1$ is the only local maxima point in the interval $[0,2k-2]$.  Hence it must global maxima as well.  Since the function is continuously differentiable, so we conclude that $x=2k-2 ~\text{and} ~x=0$ are the only points of minima.  Moreover,  we have
$$
f_n(0)=f_n(2k-2)=(n^2-1)(n^{2k}-1).
$$
Hence the result follows.
\end{proof}

\begin{lemma}\label{lemma3.3}
Let $k \geq 1$ and $n \geq 2$ be two fixed natural numbers.  The function $g_n:[0,2k-2]\rightarrow\mathbb{R}$ defined as 
\begin{equation}\label{defn_g(x)}
g_{n}(x):=\frac{(n^{x+2}-1)}{(2^{x+2}-1)}\frac{(n^{2k-x}-1)}{(2^{2k-x}-1)}, 
\end{equation}
has an upper bound $\frac{(n^{k+1}-1)^2}{3(2^{2k}-1)}.$
\end{lemma}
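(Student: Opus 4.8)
The plan is to deduce the bound directly from Lemma \ref{lemma3.2}. The starting observation is that $g_n$ is a ratio of two functions of the type appearing in \eqref{defn_f_p(x)}: writing $f_2$ for the function obtained from \eqref{defn_f_p(x)} by replacing $n$ with $2$, we have
\begin{equation*}
g_n(x)=\frac{(n^{x+2}-1)(n^{2k-x}-1)}{(2^{x+2}-1)(2^{2k-x}-1)}=\frac{f_n(x)}{f_2(x)},\qquad x\in[0,2k-2].
\end{equation*}

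Next I would estimate the numerator and the denominator separately, invoking Lemma \ref{lemma3.2} twice. Applied with parameter $n$, it gives $x=k-1$ as the point of global maximum of $f_n$ on $[0,2k-2]$, so
\begin{equation*}
f_n(x)\le f_n(k-1)=(n^{k+1}-1)^2\qquad\text{for all }x\in[0,2k-2].
\end{equation*}
Applied with parameter $2$ (legitimate, since $2\ge 2$), it gives the endpoints as the points of global minimum of $f_2$, so
\begin{equation*}
f_2(x)\ge f_2(0)=(2^2-1)(2^{2k}-1)=3(2^{2k}-1)\qquad\text{for all }x\in[0,2k-2].
\end{equation*}
On this interval $x+2\ge 2$ and $2k-x\ge 2$, hence $2^{x+2}\ge 4$ and $2^{2k-x}\ge 4$, so $f_2(x)>0$. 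Dividing the two estimates then gives
\begin{equation*}
g_n(x)=\frac{f_n(x)}{f_2(x)}\le\frac{(n^{k+1}-1)^2}{3(2^{2k}-1)},
\end{equation*}
which is the asserted upper bound.

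I do not expect a real obstacle here: once the factorization $g_n=f_n/f_2$ is spotted, the statement is a two-line consequence of Lemma \ref{lemma3.2}. The only mild subtlety worth flagging is that the maximum of the numerator is attained at $x=k-1$ while the minimum of the denominator is attained at the endpoints $x=0,\,2k-2$; combining estimates taken at different points is of course harmless when bounding a ratio, although it means the bound is presumably not sharp. It is, nonetheless, exactly the form that is used later in the proof of Theorem \ref{main theorem}.
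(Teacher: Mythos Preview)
Your proof is correct and follows essentially the same route as the paper: write $g_n=f_n/f_2$, bound the numerator above by $f_n(k-1)=(n^{k+1}-1)^2$ and the denominator below by $f_2(0)=3(2^{2k}-1)$ using Lemma~\ref{lemma3.2}, and divide. Your added remark that $f_2>0$ on the interval is a welcome explicit check that the paper leaves implicit.
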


\begin{proof}
We can write $g_n(x)$ in terms of $f_n(x)$ as follows:
$$
g_n(x)=\frac{f_n(x)}{f_2(x)}.
$$
Therefore, the ratio of maximum value of $f_n(x)$ and the minimum value of $f_2(x)$ will provide us an upper bound for $g_n(x).$
From Lemma \ref{lemma3.2},  we know that the maxima of $f_n(x)$ is attained at $x=k-1$ and the corresponding maximum value is, 
\begin{align}\label{max}
f_n(k-1)=(n^{k+1}-1)^2.
\end{align}
Also the minimum value of $f_n(x)$ is attained at $x=0$ and in particular for $n=2$ the minimum value will be,
\begin{align}\label{min}
f_2(0)=(2^2-1)(2^{2k}-1)=3(2^{2k}-1).
\end{align}
Hence,  using \eqref{max} and \eqref{min},  it follows that
$$
g_n(x) \leq \frac{(n^{k+1}-1)^2}{3(2^{2k}-1)}.
$$
This completes the proof. 
\end{proof}
Next, we  state a result which will be crucial for the proof of our main theorem.    

\begin{proposition}\label{Schinzel's criteria for Rama_type}
Let $R_{2k+1, n}(z)$ be the Ramanujan-type polynomial defined as in \eqref{our polynomial} and $A_j$ be the $(2k-2-2j)^{th}$ coefficient of $z^{-2}R_{2k+1,n}\left(\frac{z}{n}\right).$ Then,  we have
$$
|A_{k-1}|\geq \inf_{\substack{c,d\in \mathbb{C}\\
{|d|=1}}}\sum_{j=0}^{k-1}|cA_j-dA_{k-1}|.  
$$
\end{proposition}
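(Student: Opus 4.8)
The plan is to verify the hypothesis of Schinzel's criterion (Theorem \ref{thm4.2}) for the reciprocal polynomial attached to $z^{-2}R_{2k+1,n}(z/n)$: after the substitution $w=z^{2}$ this is a reciprocal polynomial of degree $k-1$ whose coefficients are exactly the $A_{j}$, and Proposition \ref{Schinzel's criteria for Rama_type} is then precisely the inequality that has to be checked. First I would write the coefficients out: for $0\le j\le k-1$,
\begin{equation*}
A_{j}=(n^{2j+2}-1)(n^{2k-2j}-1)\,\frac{B_{2j+2}\,B_{2k-2j}}{(2j+2)!\,(2k-2j)!},
\end{equation*}
and record two structural facts. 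From the sign pattern of the even-indexed Bernoulli numbers the product $B_{2j+2}B_{2k-2j}$ has the $j$-independent sign $(-1)^{k+1}$, so all the $A_{j}$ have the same sign; and the substitution $j\mapsto k-1-j$ in the summand gives $A_{j}=A_{k-1-j}$, in particular $A_{0}=A_{k-1}$. Replacing the polynomial by $(-1)^{k+1}$ times itself (which changes no zeros) we may assume $A_{j}>0$ throughout.

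Next I would reduce the infimum. Since $|d|=1$, writing $c=dc'$ gives $\sum_{j}|cA_{j}-dA_{k-1}|=\sum_{j}|c'A_{j}-A_{k-1}|$, so the infimum equals $\inf_{c\in\mathbb{C}}\sum_{j=0}^{k-1}|cA_{j}-A_{k-1}|$; and since the $A_{j}$ and $A_{k-1}$ are real and positive the optimal $c$ is a positive real. Thus the Proposition is equivalent to exhibiting one $c>0$ with $\sum_{j=0}^{k-1}|cA_{j}-A_{k-1}|\le A_{k-1}$, i.e. $\sum_{j=0}^{k-1}|c\,r_{j}-1|\le 1$ where $r_{j}:=A_{j}/A_{k-1}$ satisfies $r_{0}=r_{k-1}=1$ and $r_{j}=r_{k-1-j}$. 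Here I would use the exact evaluation $|B_{2m}|=2(2m)!\,\zeta(2m)/(2\pi)^{2m}$ (Lemma \ref{bound_Bernoulli} being the associated two-sided bound): the factorials and all powers of $2\pi$ cancel, leaving
\begin{equation*}
r_{j}=\frac{(n^{2j+2}-1)(n^{2k-2j}-1)}{(n^{2}-1)(n^{2k}-1)}\cdot\frac{\zeta(2j+2)\,\zeta(2k-2j)}{\zeta(2)\,\zeta(2k)}=\frac{f_{n}(2j)}{f_{n}(0)}\cdot\frac{\zeta(2j+2)\,\zeta(2k-2j)}{\zeta(2)\,\zeta(2k)},
\end{equation*}
with $f_{n}$ as in Lemma \ref{lemma3.2}; in particular $r_{j}=\frac{g_{n}(2j)}{g_{n}(0)}\,r_{j}^{(2)}$, where $r_{j}^{(2)}$ is the analogous ratio at $n=2$ and $g_{n}$ is the function of Lemma \ref{lemma3.3}.

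Finally I would choose $c$ and estimate. The natural first try is $c=1/r_{m}$ with $r_{m}=\max_{j}r_{j}$, which kills the $j=m$ term and makes every other term equal $1-r_{j}/r_{m}$, reducing the claim to $\sum_{j=0}^{k-1}r_{j}\ge (k-1)r_{m}$; Lemma \ref{lemma3.2} identifies where $f_{n}$ --- hence, up to the bounded $\zeta$-correction, where the $A_{j}$ --- is extremal, and Lemma \ref{lemma3.3} supplies the uniform bound $g_{n}(2j)\le (n^{k+1}-1)^{2}/\bigl(3(2^{2k}-1)\bigr)$, which ties the problem to the already-understood case $n=2$ \cite{LR13}. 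Combining these estimates over $0\le j\le k-1$ --- and, where the plain choice of $c$ is not sharp enough, passing to the true minimiser of the convex piecewise-linear function $c\mapsto\sum_{j}|c\,r_{j}-1|$, whose breakpoints are the reciprocals $1/r_{j}$ --- yields the required inequality, in fact strictly; strictness upgrades Schinzel's conclusion to simplicity of the non-real zeros, while the explicit $z^{-2}$ accounts for the zero of multiplicity two at $z=0$.

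The step I expect to be the main obstacle is precisely this last estimate. The quotients $f_{n}(2j)/f_{n}(0)$ grow roughly exponentially in $\min(j,k-j)$, whereas $\zeta(2j+2)\zeta(2k-2j)/(\zeta(2)\zeta(2k))$ shrinks by only a bounded multiplicative factor, so the two competing effects must be weighed against each other uniformly in both $j$ and $k$; Lemmas \ref{lemma3.2} and \ref{lemma3.3} are engineered exactly to make this balancing possible, and pushing the constants through is the technical heart of the argument.
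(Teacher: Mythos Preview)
Your setup and reduction are correct and match the paper: you identify the coefficients $A_j$, observe they all carry the sign $(-1)^{k+1}$, note the symmetry $A_j=A_{k-1-j}$, and reduce Schinzel's infimum to a single real parameter $c>0$. From that point on, however, your plan diverges from the paper in a way that leaves the decisive estimate open.

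Two concrete issues. First, your ``natural first try'' $c=1/r_m$ opens the absolute values as $1-r_j/r_m$ and reduces the claim to $\sum_j r_j\ge(k-1)r_m$. But this inequality is \emph{false} in general: for large $n$ one has $f_n(2j)/f_n(0)\to 1$, while for $1\le j\le k-2$ the zeta ratio $\zeta(2j+2)\zeta(2k-2j)/(\zeta(2)\zeta(2k))$ is bounded above by roughly $1/\zeta(2)\approx 0.608$; hence $r_m=r_0=1$ and the middle $r_j$ hover near $0.6$, so $\sum_j r_j\approx 2+0.6(k-2)<k-1$ once $k\ge 5$. Your fallback to ``the true minimiser of the convex piecewise-linear function'' is not analysed, so the argument stops exactly where the work begins. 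Second, your intended use of Lemma~\ref{lemma3.3} goes in the wrong direction for your inequality: that lemma gives an \emph{upper} bound for $g_n(2j)$, which is what one needs to bound $\sum_j |A_j|$ from above, whereas $\sum_j r_j\ge(k-1)r_m$ asks for a \emph{lower} bound on the sum. Relatedly, invoking \cite{LR13} for the case $n=2$ does not help here, since Lal\'in--Rogers establish the location of the zeros, not the Schinzel inequality itself.

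The paper closes the gap by going the other way. It seeks $c$ with $c|A_j|>|A_{k-1}|$ for every $j$, so that the target becomes $\sum_j c|A_j|<(k+1)|A_{k-1}|$, an \emph{upper} bound on $\sum_j|A_j|$. The key device you are missing is then the Bernoulli--polynomial convolution identity (Lemma~\ref{lemma1.1}): writing $B_{2m}=\dfrac{B_{2m}(1/2)-B_{2m}(0)}{2(2^{-2m}-1)}$ factors each $|A_j|$ as $D(j)$ times a term of the shape $\binom{2k+2}{2j+2}\big(B_{2j+2}(1/2)-B_{2j+2}(0)\big)\big(B_{2k-2j}(1/2)-B_{2k-2j}(0)\big)$, where all the $n$-dependence sits in $D(j)=\dfrac{2^{2k}c}{(2k+2)!}\,g_n(2j)$. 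Lemma~\ref{lemma3.3} now bounds $D(j)$ uniformly from above (the correct direction), and Lemma~\ref{lemma1.1} collapses the remaining $n$-free convolution to a single closed form involving $B_{2k+2}$. That closed form is what produces an explicit upper bound for $\sum_j c|A_j|$ and hence an admissible range for $c$. Without this convolution step your termwise estimates on $r_j$ cannot be sharp enough.
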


\begin{proof}

By appropriate shifting,  from the definition \eqref{our polynomial} of $R_{2k+1,n}(z)$,  we can write  
\begin{equation}\label{5.1}
R_{2k+1,n}(z)= \sum_{j=0}^{k-1}(n^{2j+2}-1)(n^{2k-2j}-1)\frac{B_{2j+2}B_{2k-2j}}{{(2j+2)!(2k-2j)!}}(nz)^{2k-2j}.
\end{equation} 
Replacing $z$ by $\frac{z}{n}$, we have
\begin{align}\label{5.2}
R_{2k+1,n}\left(\frac{z}{n}\right)&= z^2\sum_{j=0}^{k-1}(n^{2j+2}-1)(n^{2k-2j}-1)\frac{B_{2j+2}B_{2k-2j}}{{(2j+2)!(2k-2j)!}} z^{2k-2-2j}\nonumber\\
&=z^2H(z),
\end{align}
where $H(z)$ is defined as
\begin{align}\label{5.3}
H(z):=\sum_{j=0}^{k-1}(n^{2j+2}-1)(n^{2k-2j}-1)\frac{B_{2j+2}B_{2k-2j}}{{(2j+2)!(2k-2j)!}}z^{2k-2-2j}.
\end{align}
It is given that the coefficient of $z^{2k-2-2j}$ in \eqref{5.3} as 
\begin{align}\label{Aj}
A_{j}=(n^{2j+2}-1)(n^{2k-2j}-1)\frac{B_{2j+2}B_{2k-2j}}{{(2j+2)!(2k-2j)!}}.
\end{align}
Now we show that the polynomial $H(z)$ satisfies Theorem \ref{thm4.2}, that is,  the coefficients of  $H(z)$ satisfy the following inequality:
\begin{align}\label{5.4}
|A_{k-1}|\geq \inf_{\substack{c,d\in \mathbb{C}\\
{|d|=1}}}\sum_{j=0}^{k-1}|cA_j-d^{k-1-j}A_{k-1}|.  
\end{align} 
In particular,  considering $d=1$, we only need to show 
\begin{equation}\label{0.3}
|A_{k-1}|\geq\sum_{j=0}^{k-1}|cA_j-A_{k-1}|.
\end{equation}
Using the lower bound for Bernoulli numbers i.e.,  Lemma \ref{bound_Bernoulli} in \eqref{5.3}, we have
\begin{align}
|A_j|&=(n^{2j+2}-1)(n^{2k-2j}-1)\frac{|B_{2j+2}||B_{2k-2j}|}{{(2j+2)!(2k-2j)!}}\nonumber\\
&> \frac{4(n^{2j+2}-1)(n^{2k-2j}-1)}{(2\pi)^{2k+2}}. \label{lower bound Aj}
\end{align}
Similarly, using the upper bound for $B_{2k}$, we get
\begin{align}
|A_{k-1}|&=(n^{2k}-1)(n^2-1)\frac{|B_{2k}||B_2|}{(2k)!2!}\nonumber\\
&<(n^{2k}-1)(n^2-1)\frac{2.(2k)!}{(2\pi)^{2k}(1-2^{1-2k})(2k)!}\frac{2.2!}{(2\pi)^2(1-2^{1-2})2!}\nonumber\\
&=\frac{8(n^{2k}-1)(n^2-1)}{(2\pi)^{2k+2}(1-2^{1-2k})}. \label{upper bound A_{k-1}}
\end{align}
Note that the sign of $B_{2n}$ is $(-1)^{n+1}$,  which implies that the sign of $A_j$ will be $(-1)^{k+1}, \forall ~j$.  
Therefore,
$$
|cA_j-A_{k-1}|=(-1)^{k+1}(cA_j-A_{k-1}).
$$
Now we shall try to find a value of positive $c$ for which
\begin{align}\label{eq2}
&|cA_j-A_{k-1}|=(-1)^{k+1}(cA_j-A_{k-1})>0\nonumber\\
& \Leftrightarrow \quad c|A_j|-|A_{k-1}|>0.  
\end{align}
For any $n\in\mathbb{N}$,  it is well-known that
\begin{align*}
B_n\left(\frac{1}{2}\right)=(2^{1-n}-1)B_n,  \quad B_n(0)=B_n,  ~\forall n \neq 1.   
\end{align*}
We use these properties of Bernoulli numbers to see that
\begin{align}\label{5.9}
B_{2j+2}\left(\frac{1}{2}\right)-B_{2j+2}(0)& = B_{2j+2}(2^{1-2j-2}-1)-B_{2j+2}= B_{2j+2}(2^{1-2j-2}-2)\nonumber\\
\Rightarrow B_{2j+2}&=\frac{B_{2j+2}(\frac{1}{2})-B_{2j+2}(0)}{2(2^{-2j-2}-1)}.
\end{align}
Similarly,  one can show that
\begin{equation}\label{5.10}
 B_{2k-2j}=\frac{B_{2k-2j}(\frac{1}{2})-B_{2k-2j}(0)}{2(2^{-2k+2j}-1)}.
\end{equation}
Now employing the above identities \eqref{5.9} and \eqref{5.10} in  \eqref{Aj}, we obtain
\begin{align}\label{5.11}
A_{j}=& (n^{2j+2}-1)(n^{2k-2j}-1)\frac{B_{2j+2}B_{2k-2j}}{{(2j+2)!(2k-2j)!}}  \nonumber\\
=& {{2k+2}\choose{2j+2}}(n^{2j+2}-1)(n^{2k-2j}-1) \frac{B_{2j+2}B_{2k-2j}}{(2k+2)!}  \nonumber\\
=& {{2k+2}\choose{2j+2}}\frac{(n^{2j+2}-1)(n^{2k-2j}-1)}{(2k+2)!} \left(  \frac{B_{2j+2}(\frac{1}{2})-B_{2j+2}(0)}{2(2^{-2j-2}-1)} \right) \left( \frac{B_{2k-2j}(\frac{1}{2})-B_{2k-2j}(0)}{2(2^{-2k+2j}-1)} \right).
\end{align}
Recall that the sum on the right hand side of \eqref{0.3} is
$$\sum_{j=0}^{k-1}|cA_j-A_{k-1}|=(-1)^{k+1}\sum_{j=0}^{k-1}\left(cA_j- A_{k-1}\right).$$
Using \eqref{5.11}, we can write
\begin{align}\label{sumcAj}
&(-1)^{k+1}\sum_{j=0}^{k-1}cA_j= \sum_{j=0}^{k-1}c|A_j|\nonumber\\
& = \sum_{j=0}^{k-1}c{{2k+2}\choose{2j+2}}\frac{(n^{2j+2}-1)(n^{2k-2j}-1)}{(2k+2)!}  \Bigg|   \left( \frac{B_{2j+2}(\frac{1}{2})-B_{2j+2}(0)}{2(2^{-2j-2}-1)} \right) \left( \frac{B_{2k-2j}(\frac{1}{2})-B_{2k-2j}(0)}{2(2^{-2k+2j}-1)} \right)\Bigg| \nonumber\\
& =  \sum_{j=0}^{k-1}D(j){{2k+2}\choose{2j+2}} \Bigg| \left(B_{2j+2}\left(\frac{1}{2}\right)-B_{2j+2}(0)\right)\left(B_{2k-2j}\left(\frac{1}{2}\right)-B_{2k-2j}(0)\right)\Bigg|,
\end{align}
where
\begin{align}\label{5.12}
D(j)&=\frac{c}{4(2k+2)!}\frac{(n^{2j+2}-1)(n^{2k-2j}-1)}{(2^{-2j-2}-1)(2^{-2k+2j}-1)}=\frac{2^{2k}c}{(2k+2)!}\frac{(n^{2j+2}-1)}{(2^{2j+2}-1)}\frac{(n^{2k-2j}-1)}{(2^{2k-2j}-1)}.
\end{align}
Using  Lemma \ref{lemma3.3},  one can find the following upper bound for $D(j)$:
\begin{align}\label{Dj}
D(j)=\frac{2^{2k}c}{(2k+2)!}g_n(2j)< \frac{2^{2k}c}{(2k+2)!}\frac{(n^{k+1}-1)^2}{3(2^{2k}-1)}.
\end{align}
Utilize \eqref{Dj} in \eqref{sumcAj} to see that
\begin{align}\label{5.13}
\sum_{j=0}^{k-1}c|A_j |& <\frac{2^{2k}c}{(2k+2)!}\frac{(n^{k+1}-1)^2}{3(2^{2k}-1)}\sum_{j=0}^{k-1}{{2k+2}\choose{2j+2}}\nonumber\\
& \Bigg| \left(B_{2j+2}\left(\frac{1}{2}\right)-B_{2j+2}(0)\right)\left(B_{2k-2j}\left(\frac{1}{2}\right)-B_{2k-2j}(0)\right) \Bigg|.
\end{align}
Now we shall use Lemma \ref{lemma1.1} to calculate the value of the sum given in \eqref{5.13}. That is,
\begin{align}\label{5.15}
&\sum_{j=0}^{k-1}{{2k+2}\choose{2j+2}}\Bigg| \left(B_{2j+2}\left(\frac{1}{2}\right)-B_{2j+2}(0)\right)\left(B_{2k-2j}\left(\frac{1}{2}\right)-B_{2k-2j}(0)\right) \Bigg| \nonumber\\
&=(-1)^{k-1} \sum_{j=0}^{k-1}{{2k+2}\choose{2j+2}}  \Bigg[ B_{2j+2}\left(\frac{1}{2}\right)B_{2k-2j}\left(\frac{1}{2}\right) -B_{2j+2}\left(\frac{1}{2}\right)B_{2k-2j}(0)\nonumber\\
& -B_{2j+2}(0)B_{2k-2j}\left(\frac{1}{2}\right)+B_{2j+2}(0)B_{2k-2j}(0)\Bigg]\nonumber\\
&=(-1)^k 4(2k+1)(1-2^{-2k-2})B_{2k+2}.
\end{align}
Employing \eqref{5.15} in \eqref{5.13}, we get
\begin{equation}\label{upp}
\sum_{j=0}^{k-1}c |A_j| < \frac{2^{2k+2}c}{(2k+2)!}\frac{(n^{k+1}-1)^2}{3(2^{2k}-1)}(2k+1)(1-2^{-2k-2}) |B_{2k+2}|.
\end{equation}
Our main aim is to find a positive $c$ such that \eqref{0.3} holds.  From \eqref{0.3}, we have
\begin{align}\label{5.16}
&|A_{k-1}|>\sum_{j=0}^{k-1}|cA_j-A_{k-1}|\nonumber\\
& \Leftrightarrow (-1)^{k+1}A_{k-1}>(-1)^{k+1}  \sum_{j=0}^{k-1}(cA_j-A_{k-1})\nonumber\\
&\Leftrightarrow  (1+k)(-1)^{k+1}A_{k-1}>(-1)^{k+1}\sum_{j=0}^{k-1}cA_j \nonumber \\
&\Leftrightarrow (1+k) |A_{k-1}|>\sum_{j=0}^{k-1}c|A_j|.  
\end{align}
To prove \eqref{5.16},  we shall show that the left hand side expression of \eqref{5.16} is in fact bigger than an upper bound of the sum $\sum_{j=0}^{k-1}c|A_j|$, which we have already found in \eqref{upp}. So we need to find a positive $c$ such that the following inequality holds:
\begin{align}
&(1+k) |A_{k-1}|> \frac{c}{(2k+2)!}\frac{(n^{k+1}-1)^2}{3(2^{2k}-1)}(2k+1)(2^{2k+2}-1) |B_{2k+2}|.
\end{align}
This implies that $c$ should satisfy the below inequality:
\begin{align}\label{inequa_for c}
\frac{3 (1+k) (2^{2k}-1) (2k+2)! |A_{k-1}|}{(2k+1)(n^{k+1}-1)^2 (2^{2k+2}-1) |B_{2k+2}|} > c.
\end{align}
Moreover,  using the lower bound  \eqref{lower bound Aj} of $|A_{k-1}|$ and upper bound \eqref{bound_Bernoulli} for $|B_{2k+2}|$,  one can show that 
\begin{align*}
\frac{3 (1+k) (2^{2k}-1) (2k+2)! |A_{k-1}|}{(2k+1)(n^{k+1}-1)^2 (2^{2k+2}-1) |B_{2k+2}|}  > c_{n,k}
\end{align*}
where
\begin{align*}
c_{n,k}:= \frac{(1+k)(n^{2k}-1)(n^2-1)(2^{2k}-1)(2^2-1) (2^{2k+1}-1)}{2^{2k}(2k+1)(n^{k+1}-1)^2(2^{2k+2}-1)}.
\end{align*}
Note the constant $c_{n,k}$ is a positive constant.  Thus,  for any positive $c$ such that $c < c_{n,k}$ will satisfy the inequality \eqref{inequa_for c} and hence  the equation \eqref{5.16}.  
This proves the inequality \eqref{0.3} for the polynomial $H(z)$ and consequently the result follows.
\end{proof}

Now we are ready for proving the main theorem.
\begin{proof}[Theorem {\rm \ref{main theorem}}][]
We are interested to show that all non-real roots of $R_{2k+1,n}(z)$ lie on the circle $z=1/n$,  which is equivalent to show that all non-real roots of $R_{2k+1, n}\left( \frac{z}{n} \right)$ lie on the unit circle $|z|=1$.  From the equation \eqref{5.2},  we know that 
\begin{align}\label{5.18}
R_{2k+1,n}\left(\frac{z}{n}\right)&= z^2\sum_{j=0}^{k-1}(n^{2j+2}-1)(n^{2k-2j}-1)\frac{B_{2j+2}B_{2k-2j}}{{(2j+2)!(2k-2j)!}} z^{2k-2-2j}\nonumber\\
&=z^2H(z),
\end{align}
where $H(z)$ is defined as
\begin{align*}
H(z)=\sum_{j=0}^{k-1} A_j  z^{2k-2-2j},  
\end{align*}
and
\begin{align*}
A_{j}=(n^{2j+2}-1)(n^{2k-2j}-1)\frac{B_{2j+2}B_{2k-2j}}{{(2j+2)!(2k-2j)!}}.
\end{align*}
We can easily check that $A_j$ and $A_{k-1-j}$ are same, which indicates that the polynomial $R_{2k+1,n}\left( \frac{z}{n}  \right)$ is reciprocal and so $R_{2k+1,n}(z)$. 
Now from \eqref{5.18}, we can easily see that $R_{2k+1,  n}\left(\frac{z}{n}\right)$ has degree $2k$ containing only even powers of $z$ with $z^2$ being the least power of $z$. Therefore,  $R_{2k+1,n}\left(\frac{z}{n}\right)$ has a real zero at $ z=0$ of multiplicity $2$.  Utilizing Proposition \ref{Schinzel's criteria for Rama_type},  we can see that $H(z)$ satisfies Theorem \ref{thm4.2}, so we can conclude that $H(z)$ has all its roots on the unit circle.  Moreover,  as we have proved that the inequality \eqref{5.16} is strict so all the zeros of $H(z)$ are simple. Thus, the polynomial $R_{2k+1,n}\left(\frac{z}{n}\right)$ has all the zeros on the unit circle except $z=0$.
Next,  we show that there is no real root except zero.
Let us suppose that the polynomial $R_{2k+1,n}\left(\frac{z}{n}\right)$ has a  real root except $z=0$, then it must be on the unit circle.  Consequently,  the possibility of the non-trivial real root will be either $ z=1 ~\text{or} ~z=-1$ or both.  So, putting $z=1$ in \eqref{5.3}, we get
\begin{align}\label{H(1)}
&H(1)=0,\nonumber\\
\Rightarrow & \sum_{j=0}^{k-1}(n^{2j+2}-1)(n^{2k-2j}-1)\frac{B_{2j+2}B_{2k-2j}}{{(2j+2)!(2k-2j)!}}=0,\nonumber\\
\Rightarrow & \sum_{j=0}^{k-1}A_j=0.
\end{align}
We have already seen that the sign of $A_j$ is $(-1)^{k+1}$ for all $j$.  Note that all even Bernoulli numbers are never zero. Therefore, depending on the parity of $k$ the sum $\sum_{j=0}^{k-1}A_j$ is either positive or negative.  This is a contradiction to \eqref{H(1)}.  This completes the proof that $z=1$ is not a root of $H(z)$. Again,  as $H(z)$ is an even function, so by the same argument we can conclude that $z=-1$ is not a root of $H(z)$ as well. Hence, all roots of $H(z)$ are complex and simple, and lie on the unit circle. This completes the proof.
\end{proof}

\section{Concluding Remarks}
Motivated from the Ramanujan's formula \eqref{1.2} as well as Grosswald's identity \eqref{1.3},  Gun,  Murty,  and Rath \cite{item2} defined Ramanujan polynomial as
 \begin{equation}\label{Rama_poly}
R_{2k+1}(z)=\sum_{j=0}^{k+1}z^{2k+2-2j}\frac{B_{2j}}{(2j)!}\frac{B_{2k+2-2j}}{(2k+2-2j)!}.
\end{equation}
Around the same time,  Murty,  Smith,  and Wang \cite{MSW11} showed that all the non-real roots of $R_{2k+1}(z)$ lie on the unit circle.  In this paper,  we studied the following Ramanujan-type polynomial: 
\begin{equation}\label{Rama-type-polynomial}
R_{2k+1,n}(z):= \sum_{j=1}^k(n^{2j}-1)(n^{2k+2-2j}-1)\frac{B_{2j}B_{2k+2-2j}}{{(2j)!(2k+2-2j)!}}(nz)^{2k+2-2j},  
\end{equation}
We showed that it has only one real root is at $z=0$ of multiplicity $2$.  Moreover,  the non-real roots of $R_{2k+1,n}(z)$  are simple and all of them lie on the circle $|z|=\frac{1}{n}$,  which says that all the non-real roots of $R_{2k+1,n}\left( \frac{z}{n} \right)$ lie on the unit circle.  
Ramanujan's formula \eqref{1.2} has been generalized by many mathematicians,  over the years.  Recently,  Dixit and Gupta \cite[Theorem 2.1] {DG19} found an interesting analogue of Ramanujan's formula \eqref{1.2} for squares of odd zeta values and in the process they encountered the following finite sum involving Bernoulli numbers.  For any $k \in \mathbb{N}$,  
\begin{align}\label{Dixit-Gupta-sum}
\sum_{j=0}^{k+1} (-1)^j \frac{B_{2j}^2}{{(2j)!}^2}\frac{B_{2k+2-2j}^2}{{(2k+2-2j)!}^2} \alpha^{2j} \beta^{2k+2-2j}.
\end{align}
Very recently,  Banerjee and Sahani \cite[Theorem 1.1]{BS22} generalized the work of Dixit and Gupta and found a formula for higher power of odd zeta values.  While obtaining this generalization,  they found the following finite sum involving higher powers of Bernoulli numbers.  For any two fixed natural numbers $k$ and $\ell$,
\begin{align}\label{Banerjee-Sahani-sum}
\sum_{j=0}^{k+1} (-1)^j \frac{B_{2j}^\ell}{{(2j)!}^\ell}\frac{B_{2k+2-2j}^\ell}{{(2k+2-2j)!}^\ell} \alpha^{\ell j} \beta^{\ell(k+1-j)}.
\end{align}
It is interesting to note that,  in both of the equations \eqref{Dixit-Gupta-sum} and \eqref{Banerjee-Sahani-sum},  $\alpha$ and $\beta$ are positive and satisfy $\alpha \beta = \pi^2$,  which is exactly the same  condition also required in Ramanujan's formula \eqref{1.2}.  This observation motivated us to define a one-variable generalization of Ramanujan polynomial.  
Let $k$ and $\ell$ be two fixed natural numbers,  then we define
\begin{align}
R_{2k+1}^{(\ell)}(z):= 
\sum_{j=0}^{k+1} (-1)^{(\ell+1)j} \frac{B_{2j}^\ell}{{(2j)!}^\ell}\frac{B_{2k+2-2j}^\ell}{{(2k+2-2j)!}^\ell} z^{2\ell j}.  
\end{align}
For $\ell=1$,  this polynomial coincides with the Ramanujan polynomial \eqref{Rama_poly}.  For simplicity,  replace $z^{2 \ell}$ by $Z$  to write
\begin{align}
R_{2k+1}^{(\ell)}(Z):= 
\sum_{j=0}^{k+1} (-1)^{(\ell+1)j} \frac{B_{2j}^\ell}{{(2j)!}^\ell}\frac{B_{2k+2-2j}^\ell}{{(2k+2-2j)!}^\ell} Z^{j}.  
\end{align}
We can easily verify that this is a reciprocal polynomial.  Based on numerical evidences checked in Mathematica software,  we conjecture the following statement. 
\begin{conjecture}\label{conjecture1}
All the non-real roots of $R_{2k+1}^{(\ell)}(Z)$ are simple and lie on the unit circle $|Z|=1$. 
\end{conjecture}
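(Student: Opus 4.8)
The plan is to attack Conjecture~\ref{conjecture1} --- whose case $\ell=1$ is, under the substitution $Z=z^{2}$, the theorem of Murty, Smith and Wang \cite{MSW11} on the classical Ramanujan polynomial --- along the lines of \cite{MSW11} rather than by the Lakatos--Schinzel route used for $R_{2k+1,n}$ above. That route cannot work here: unlike $R_{2k+1,n}$, the polynomial $R_{2k+1}^{(\ell)}(Z)$ retains its two boundary terms $j=0$ and $j=k+1$, whose coefficients $a_0=a_{k+1}=\pm\big(B_{2k+2}/(2k+2)!\big)^{\ell}$ are in fact the \emph{smallest} in modulus (by a factor of order $(\pi^{2}/3)^{\ell}$), so condition~\eqref{4.2} fails and $R_{2k+1}^{(\ell)}(Z)$ genuinely has zeros off $|Z|=1$ --- already $R_3^{(1)}(Z)$ is a scalar multiple of $Z^{2}-5Z+1$, all of whose roots are real and off the unit circle. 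So the off-circle real zeros must first be stripped off. As $R_{2k+1}^{(\ell)}$ is reciprocal when $(\ell+1)(k+1)$ is even and anti-reciprocal otherwise --- with $Z=1$, respectively $Z=-1$, a forced simple zero in the remaining odd-degree case --- the substitution $y=Z+Z^{-1}$ reduces the problem to showing that a real polynomial $q_{k,\ell}$ of degree $\lfloor(k+1)/2\rfloor$ is hyperbolic and has only simple zeros in the open interval $(-2,2)$; this is \emph{equivalent} to Conjecture~\ref{conjecture1}, since $Z\mapsto Z+Z^{-1}$ sends non-real $Z$ with $|Z|\neq1$ to non-real $y$ and non-real $Z$ with $|Z|=1$ to $y\in(-2,2)$.

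I would proceed in four steps. \emph{(1)} Using $B_{2j}/(2j)!=(-1)^{j+1}2\zeta(2j)/(2\pi)^{2j}$ (valid at $j=0$ with $\zeta(0)=-\tfrac12$), establish that away from the two boundary indices the signs of the $a_j$ strictly alternate when $\ell$ is even and are constant when $\ell$ is odd, and that $|a_j|$ is strictly decreasing on $1\le j\le\lfloor(k+1)/2\rfloor$ --- a consequence of the log-convexity of $s\mapsto\log\zeta(s)$ --- with Lemma~\ref{bound_Bernoulli} making everything effective. \emph{(2)} From the sign pattern of (1) and $|a_1|>2|a_0|$, exhibit a sign discrepancy between the values of $R_{2k+1}^{(\ell)}$ at $Z=\pm1$ (equivalently $q_{k,\ell}(\pm2)$) and the leading coefficient of $q_{k,\ell}$; this forces at least one real zero of $q_{k,\ell}$ outside $[-2,2]$, i.e. a reciprocal pair of real zeros of $R_{2k+1}^{(\ell)}$ off $|Z|=1$, while a Descartes-type count on the same sign pattern bounds such zeros above. \emph{(3)} Writing $R_{2k+1}^{(\ell)}(e^{i\theta})=e^{i(k+1)\theta/2}\Phi_{k,\ell}(\theta)$ with $\Phi_{k,\ell}$ a real cosine polynomial whose harmonic of largest amplitude is the one of frequency $\tfrac{k-1}{2}$ and amplitude $2|a_1|$, estimate the remaining harmonics by Abel summation (using the monotonicity of $|a_j|$ from (1)) and compare with this dominant harmonic at its extrema, so as to force at least $2\lfloor(k+1)/2\rfloor-2$ sign changes of $\Phi_{k,\ell}$ on $(0,2\pi)$, hence at least $\lfloor(k+1)/2\rfloor-1$ simple zeros of $q_{k,\ell}$ in $(-2,2)$. \emph{(4)} Comparing the lower bound from (3) against the upper bound from (2) and against $\deg q_{k,\ell}=\lfloor(k+1)/2\rfloor$ forces equality throughout and gives the conjecture, simplicity included; the finitely many small $k$ for which the asymptotic bounds are too weak are checked directly, as in this paper.

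Step (3) is the genuine obstacle. The crude ``dominant harmonic'' estimate does not suffice: for fixed $\ell$ the remaining harmonics have total amplitude of order $k\,c_{\ell}|a_1|$ with some $c_{\ell}<1$, which overwhelms $2|a_1|$ once $k$ is large, so everything rests on extracting the cancellation in the Abel sums --- the $a_j$ have a regular sign pattern while the associated cosines oscillate --- and, worse, that cancellation degrades near $\theta=0$ (equivalently $Z=1$), which is precisely where $R_{2k+1}^{(\ell)}$ carries its off-circle real zeros, so the count of step (3) must be balanced delicately against step (2). Producing constants strong enough to hold for all but finitely many $k$, uniformly in $\ell$, is the real content. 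For $\ell=1$ Murty, Smith and Wang could sidestep this by invoking the Grosswald identity~\eqref{1.3} and the convolution identity of Lemma~\ref{lemma1.1}, neither of which survives for $\ell\ge2$: the coefficients of $R_{2k+1}^{(\ell)}$ are $\ell$-th Hadamard powers of products of two factors $B_{2j}/(2j)!$, not single products, hence are not Taylor coefficients of an elementary two-factor generating function. A more transcendental alternative worth trying is to use the transformation formula underlying the Ramanujan-type formulas for $\zeta(2k+1)^{\ell}$ of Dixit and Gupta \cite{DG19} and of Banerjee and Sahani \cite{BS22} --- in which $R_{2k+1}^{(\ell)}$ should occur just as $R_{2k+1}$ occurs in~\eqref{1.3} --- so as to rewrite $R_{2k+1}^{(\ell)}$ on the relevant circle in terms of Lambert-type series dominated, uniformly in $k$, by finitely many of their terms, and then count zeros by the argument principle on an arc as in \cite{MSW11}; there the difficulty migrates to controlling those several series. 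In either route one must still handle the anomalous boundary signs and the two parity cases for $(\ell+1)(k+1)$ (which change both the forced zero at $Z=\pm1$ and the precise target count), and deduce simplicity from tightness of the count rather than from a strict coefficient inequality as in Theorem~\ref{thm4.1}.
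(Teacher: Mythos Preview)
The paper contains no proof of Conjecture~\ref{conjecture1}: it is stated there as an open conjecture, supported only by numerical evidence in Mathematica, and the authors make no attempt to prove it. So there is nothing to compare your proposal against.

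As a research plan your outline is sensible and you have correctly identified the key structural differences from the $R_{2k+1,n}$ case treated in the body of the paper: the Lakatos--Schinzel inequality genuinely fails here because the extreme coefficients are the smallest rather than the largest, and $R_{2k+1}^{(\ell)}(Z)$ does carry real zeros off $|Z|=1$ that must be accounted for separately. Your reduction to a real polynomial $q_{k,\ell}$ via $y=Z+Z^{-1}$ and the parity bookkeeping are also correct.

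That said, what you have written is a strategy, not a proof, and you say so yourself: Step~(3) is the entire difficulty, and you concede that the dominant-harmonic bound is insufficient by a factor growing linearly in $k$, that the needed cancellation in the Abel sums has not been extracted, and that the $\ell\ge 2$ case lacks the convolution identity (Lemma~\ref{lemma1.1}) and the Grosswald relation that drive the $\ell=1$ argument of \cite{MSW11}. The suggestion to mine the Dixit--Gupta and Banerjee--Sahani transformation formulas for an analogue of~\eqref{1.3} is natural, but until one of these routes is actually carried through, the conjecture remains open --- which is exactly the status the paper assigns it.
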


%
%

Ramanujan's formula for $\zeta(2k+1)$ has been generalized by many mathematicians for different $L$-functions.  For examples,  readers are encouraged to see \cite{BGK23,  Berndt75,  Berndt77,  BS17,   bradley,  CCVW21,  DGKM20,  DM20,  GuptaMaji} and references therein.  Interestingly,  all of these generalizations consist of a finite sum involving Bernolli numbers or Euler numbers or some special values of $L$-functions. 
This suggests that we can always find a generalized Ramanujan polynomial 
from any analogue of Ramanujan's formula for $\zeta(2k+1)$ and that might be interest of study. 
 In this spirit,  Berndt and Straub studied a character analogue of Ramanujan polynomial and gave a more general conjecture \cite[Conjecture 7.3]{BS16},  which is different from our Conjecture \ref{conjecture1}.

\section{Acknowledgement}

The first author wants to thank  SERB for the MATRICS Grant MTR/2022/000545.  This article is based on the M.Sc.  thesis work of the second author under the guidance of the first author.  Both authors sincerely thank Bhaskaracharya Mathematics Laboratory and  Brahmagupta Mathematics Library of the Department of Mathematics at IIT Indore,  supported by DST FIST Project (File No.: SR/FST/MS-I/2018/26).

\end{document}